
\documentclass[11pt]{amsart}
\usepackage{amsmath}
\usepackage[active]{srcltx}
\usepackage{t1enc}
\usepackage[latin2]{inputenc}
\usepackage{verbatim}
\usepackage{amsmath,amsfonts,amssymb,amsthm}
\usepackage[mathcal]{eucal}
\usepackage{enumerate}
\usepackage[centertags]{amsmath}
\usepackage{graphics}
\usepackage[active]{srcltx}

\setcounter{MaxMatrixCols}{10}

\newtheorem{theorem}{Theorem}
\newtheorem{lemma}{Lemma}
\newtheorem*{TK}{Theorem T}

\numberwithin{equation}{subsection}

\begin{document}
\author{Ushangi Goginava and Larry Gogoladze}
\title[Convergence in Norm]{Convergence in Norm of logarithmic means of
Multiple Fourier series}
\address{U. Goginava, Department of Mathematics, Faculty of Exact and
Natural Sciences, Ivane Javakhishvili Tbilisi State University, Chavchavadze
str. 1, Tbilisi 0128, Georgia}
\email{zazagoginava@gmail.com}
\address{L. Gogoladze, Department of Mathematics, Faculty of Exact and
Natural Sciences, Ivane Javakhishvili Tbilisi State University, Chavchavadze
str. 1, Tbilisi 0128, Georgia}
\email{lgogoladze1@hotmail.com}
\maketitle

\begin{abstract}
The maximal Orlicz space such that the mixed logarithmic means of multiple
Fourier series for the functions from this space converge in $L_{1}$-norm is
found.
\end{abstract}

\footnotetext{%
2010 Mathematics Subject Classification 42A24 .
\par
Key words and phrases: double Fourier series, Orlicz space, Convergence in
norm
\par
The research of U. Goginava was supported by project
Shota Rustaveli National Science Foundation grant no.31/48 (Operators in some function spaces and their applications in
Fourier analysis)}

Let $\mathbb{T}^{d}:=[-\pi ,\pi )^{d}$ denote a cube in the $d$-dimensional
Euclidean space $\mathbb{R}^{d}$. The elements of $\mathbb{R}^{d}$ are
denoted by $\mathbf{x:=}\left( x_{1},...,x_{d}\right) $.

Let $D:=\left\{ 1,2,...,d\right\} ,B:=\left\{ l_{1},l_{2},...,l_{r}\right\}
,1\leq r\leq d,B\subset M,l_{k}<l_{k+1},k=1,2,...,r-1,B^{\prime
}:=D\backslash B$. For any $\mathbf{x=}\left( x_{1},...,x_{d}\right) $ and
any $B\subset D$ , denote $\mathbf{x}_{B}:=\left(
x_{l_{1}},x_{l_{2}},...,x_{l_{r}}\right) \in \mathbb{R}^{r}$. We assume that
$|B|$ is the number of elements of $B$. If $B\neq \varnothing $, then for
any natural numbers $n$ we suppose that $n\left( B\right) :=\left(
n,n,...,n\right) \in \mathbb{R}^{|B|}.$ The notation $a\lesssim b$ in the
paper stands for $a\leq cb$, where $c$ is an absolute constant.

In the sequel we shall identify the symbols%
\begin{equation*}
\sum\limits_{i_{B}=0_{B}}^{n_{B}}\text{ \ and }\sum%
\limits_{i_{l_{1}}=0}^{n_{l_{1}}}\cdots
\sum\limits_{i_{l_{r}}=0}^{n_{l_{r}}},d\mathbf{t}_{B}\text{ and }%
dt_{l_{1}}\cdots dt_{l_{r}}.
\end{equation*}

We denote by $L_{p}\left( \mathbb{T}^{d}\right) $ the class of all
measurable functions $f$ that are $2\pi $-variable with respect to all
variable and satisfy%
\begin{equation*}
\left\Vert f\right\Vert _{p}:=\left( \int\limits_{\mathbb{T}%
^{d}}|f|^{p}\right) ^{1/p}<\infty .
\end{equation*}

Let $f\in L_{1}\left( \mathbb{T}^{d}\right) .$ The Fourier series of $f$
with respect to the trigonometric system is the series
\begin{equation*}
S\left[ f\right] :=\sum_{n_{1},...,n_{d}=-\infty }^{+\infty }\widehat{f}%
\left( n_{1},...,n_{d}\right) e^{i\left( n_{1}x_{1}+\cdots
+n_{d}x_{d}\right) },
\end{equation*}%
where
\begin{equation*}
\widehat{f}\left( n_{1},...,n_{d}\right) :=\frac{1}{\left( 2\pi \right) ^{d}}%
\int\limits_{\mathbb{T}^{d}}f(x_{1},...,x_{d})e^{-i\left( n_{1}x_{1}+\cdots
+n_{d}x_{d}\right) }dx_{1}\cdots dx_{d}
\end{equation*}%
are the Fourier coefficients of the function $f$. The rectangular partial
sums are defined as follows:
\begin{equation*}
S_{N_{D}}(f;\mathbf{x}):=\sum_{n_{D}=-N_{D}}^{N_{D}}\widehat{f}\left(
n_{1},...,n_{d}\right) e^{i\left( n_{1}x_{1}+\cdots +n_{d}x_{d}\right) }.
\end{equation*}

In the literature, there is known the notion of the Riesz's logarithmic
means of a Fourier series. The $n$-th Riesz logarithmic mean of the Fourier
series of the integrable function $f$ is defined by
\begin{equation*}
\frac{1}{l_{n}}\sum_{k=0}^{n}\frac{S_{k}(f)}{k+1},l_{n}:=\sum_{k=0}^{n}\frac{%
1}{k+1},
\end{equation*}%
where $S_{k}(f)$ is the partial sum of its Fourier series. This Riesz's
logarithmic means with respect to the trigonometric system has been studied
by a lot of authors. We mention for instance the papers of Szász, and Yabuta
\cite{sza, ya}. This mean with respect to the Walsh, Vilenkin system is
discussed by Simon, and Gát \cite{Sim, gat}.

Let $\left\{ q_{k}:k\geq 0\right\} $ be a sequence of nonnegative numbers.
The Nörlund means for the Fourier series of $f$ are defined by
\begin{equation*}
\frac{1}{\sum_{k=0}^{n}q_{k}}\sum_{k=0}^{n}q_{k}S_{n-k}(f).
\end{equation*}%
If $q_{k}=\frac{1}{k+1}$, then we get the (Nörlund) logarithmic means:
\begin{equation*}
\frac{1}{l_{n}}\sum_{k=0}^{n}\frac{S_{n-k}(f)}{k+1}.
\end{equation*}
Although, it is a kind of \textquotedblleft reverse\textquotedblright\
Riesz's logarithmic means. In \cite{gg} we proved some convergence and
divergence properties of the logarithmic means of Walsh-Fourier series of
functions in the class of continuous functions, and in the Lebesgue space $L$%
. \

The Nörlund logarithmic and Reisz logarithmic means of multiple Fourier
series are defined by%
\begin{equation*}
L_{n_{D}}\left( f;\mathbf{x}\right) :=\frac{1}{\prod\limits_{i\in D}l_{i}}%
\sum\limits_{i_{D}=0_{D}}^{n_{D}}\frac{S_{n_{D}-i_{D}}\left( f;\mathbf{x}%
\right) }{\prod\limits_{j\in D}\left( i_{j}+1\right) },
\end{equation*}%
\begin{equation*}
R_{n_{D}}\left( f;\mathbf{x}\right) :=\frac{1}{\prod\limits_{i\in D}l_{i}}%
\sum\limits_{i_{D}=0_{D}}^{n_{D}}\frac{S_{i_{D}}\left( f;\mathbf{x}\right) }{%
\prod\limits_{j\in D}\left( i_{j}+1\right) }.
\end{equation*}

It is evident that
\begin{equation*}
L_{n_{D}}\left( f;\mathbf{x}\right) =\frac{1}{\pi ^{d}}\int\limits_{\mathbb{T%
}^{d}}f\left( \mathbf{t}\right) F_{n_{D}}\left( \mathbf{x}-\mathbf{t}\right)
d\mathbf{t}
\end{equation*}%
and%
\begin{equation*}
R_{n_{D}}\left( f;\mathbf{x}\right) =\frac{1}{\pi ^{d}}\int\limits_{\mathbb{T%
}^{d}}f\left( \mathbf{t}\right) G_{n_{D}}\left( \mathbf{x}-\mathbf{t}\right)
d\mathbf{t,}
\end{equation*}%
where
\begin{equation*}
F_{n_{D}}\left( \mathbf{x}\right) :=\prod\limits_{j\in D}F_{n_{j}}\left(
x_{j}\right) ,G_{n_{D}}\left( \mathbf{x}\right) :=\prod\limits_{j\in
D}G_{n_{j}}\left( x_{j}\right) ,
\end{equation*}%
\begin{equation*}
F_{n}\left( u\right) :=\frac{1}{l_{n}}\sum\limits_{i=0}^{n}\frac{%
D_{n-i}\left( u\right) }{i+1},G_{n}\left( u\right) :=\frac{1}{l_{n}}%
\sum\limits_{i=0}^{n}\frac{D_{i}\left( u\right) }{i+1}.
\end{equation*}

Let $B\subset D$. Then the mixed logarithmic means of multiple Fourier
series are defined by
\begin{equation*}
\left( L_{n_{B}}\circ R_{n_{B^{\prime }}}\right) \left( f;\mathbf{x}\right)
:=\frac{1}{\prod\limits_{i\in D}l_{i}}\sum\limits_{i_{D}=0_{D}}^{n_{D}}\frac{%
S_{n_{B}-i_{B},i_{B^{\prime }}}\left( f;\mathbf{x}\right) }{%
\prod\limits_{j\in D}\left( i_{j}+1\right) }.
\end{equation*}%
It is easy to show that%
\begin{equation*}
\left( L_{n_{B}}\circ R_{n_{B^{\prime }}}\right) \left( f;\mathbf{x}\right) =%
\frac{1}{\pi ^{d}}\int\limits_{\mathbb{T}^{d}}f\left( \mathbf{t}\right)
F_{n_{B}}\left( \mathbf{x}_{B}-\mathbf{t}_{B}\right) G_{n_{B^{\prime
}}}\left( \mathbf{x}_{B^{\prime }}-\mathbf{t}_{B^{\prime }}\right) d\mathbf{t%
}
\end{equation*}

We denote by $L_{0}(\mathbb{T}^{d})$ the Lebesgue space of functions that
are measurable and finite almost everywhere on $\mathbb{T}^{d}$. mes$\left(
A\right) $ is the Lebesgue measure of the set $A\subset \mathbb{T}^{d}$.

Let $L_{Q}=L_{Q}(\mathbb{T}^{d})$ be the Orlicz space \cite{KR} generated by
Young function $Q$, i.e. $Q$ is convex continuous even function such that $%
Q(0)=0$ and

\begin{equation*}
\lim\limits_{u\rightarrow +\infty }\frac{Q\left( u\right) }{u}=+\infty
,\,\,\,\,\lim\limits_{u\rightarrow 0}\frac{Q\left( u\right) }{u}=0.
\end{equation*}

This space is endowed with the norm
\begin{equation*}
\Vert f\Vert _{L_{Q}(\mathbb{T}^{d})}=\inf \{k>0:\int\limits_{\mathbb{T}%
^{d}}Q(\left\vert f\right\vert /k)\leq 1\}.
\end{equation*}

In particular, if $Q(u)=u\log ^{r}(1+u)$ ,$r=1,2,...,\quad u>0$, then the
corresponding space will be denoted by $L\log ^{r}L(\mathbb{T}^{d})$.

The rectangular partial sums of double Fourier series $S_{n,m}\left(
f;x,y\right) $ of the function $f\in L_{p}\left( \mathbb{T}^{2}\right) ,%
\mathbb{T}:=[-\pi ,\pi ),1<p<\infty $ converge in $L_{p}$ norm to the
function $f$, as $n\rightarrow \infty $ \cite{Zh}. In the case $L_{1}\left(
\mathbb{T}^{2}\right) $ this result does not hold . But for $f\in
L_{1}\left( \mathbb{T}\right) $, the operator $S_{n}\left( f;x\right) $ are
of weak type (1,1) \cite{Zy}. This estimate implies convergence of $%
S_{n}\left( f;x\right) $ in measure on $\mathbb{T}$ to the function $f\in
L_{1}\left( \mathbb{T}\right) $. However, for double Fourier series this
result does not hold \cite{Ge, Kon}. \ Moreover, it is proved that
quadratical partial sums $S_{n,n}\left( f;x,y\right) $ of double Fourier
series do not converge in two-dimensional measure on $\mathbb{T}^{2}$ \ even
for functions from Orlicz spaces wider than Orlicz space $L\log L\left(
\mathbb{T}^{2}\right) $. On the other hand, it is well-known that if the
function $f\in L\log L\left( \mathbb{T}^{2}\right) $, then rectangular
partial sums $S_{n,m}\left( f;x,y\right) $ converge in measure on $\mathbb{T}%
^{2}$.

Classical regular summation methods often improve the convergence of Fourier
series. For instance, the Fejér means of the double Fourier series of the
function $f\in L_{1}\left( \mathbb{T}^{2}\right) $ converge in $L_{1}\left(
\mathbb{T}^{2}\right) $ norm to the function $f$ \cite{Zh}. These means
present the particular case of the Nörlund means.

It is well know that the method of Nörlund logarithmic means of double
Fourier series, is weaker than the Cesáro method of any positive order. In
\cite{Tk} Tkebuchava proved, that these means of double Fourier series in
general do not converge in two-dimensional measure on $\mathbb{T}^{d}$ even
for functions from Orlicz spaces wider than Orlicz space $L\log
^{d-1}L\left( \mathbb{T}^{d}\right) $. In particular, the following result
is true.

\begin{TK}
\label{tkebuchava}Let $L_{Q}\left( \mathbb{T}^{d}\right) $ be an Orlicz
space, such that%
\begin{equation*}
L_{Q}\left( \mathbb{T}^{d}\right) \nsubseteq L\log ^{d-1}L\left( \mathbb{T}%
^{d}\right) .
\end{equation*}%
Then the set of the function from the Orlicz space $L_{Q}\left( \mathbb{T}%
^{d}\right) $ with Nörlund logarithmic means of rectangular partial sums of $d$-dimensional
Fourier series, convergent in measure on $\mathbb{T}^{d}$, is of first Baire
category in $L_{Q}\left( \mathbb{T}^{d}\right) $.
\end{TK}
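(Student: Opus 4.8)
The statement is a condensation-of-singularities result, and the natural route is the Banach--Steinhaus (uniform boundedness) principle transported to the space $L_0(\mathbb{T}^d)$ of measurable functions equipped with the topology of convergence in measure. Write $U_{n_D}:=L_{n_D}$ for the N\"orlund logarithmic operators and regard them as continuous linear maps from the Banach space $L_Q(\mathbb{T}^d)$ into $L_0(\mathbb{T}^d)$, the latter being a complete metrizable topological vector space under, say,
\begin{equation*}
\rho(f,g):=\int_{\mathbb{T}^d}\frac{\abs{f-g}}{1+\abs{f-g}}.
\end{equation*}
Each $U_{n_D}$ is continuous: its kernel $F_{n_D}$ is bounded, so for fixed $n_D$ one has $\norm{U_{n_D}f}_\infty\lesssim\norm{F_{n_D}}_\infty\norm{f}_1\lesssim\norm{f}_{L_Q}$ (using $L_Q\hookrightarrow L_1$ on the finite measure space), whence $U_{n_D}$ maps $L_Q$ into $L_\infty\subset L_0$ continuously. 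Put $E:=\set{f\in L_Q(\mathbb{T}^d):(U_{n_D}f)_n\text{ converges in measure}}$, the set of $f$ for which $(U_{n_D}f)_n$ is $\rho$-Cauchy. One checks on trigonometric monomials that $E$ is nonempty and in fact dense, since $L_{n}e^{ik\cdot}=\tfrac{l_{n-\abs{k}}}{l_n}e^{ik\cdot}\to e^{ik\cdot}$ in each variable, so $U_{n_D}e^{i\langle\mathbf{k},\mathbf{x}\rangle}\to e^{i\langle\mathbf{k},\mathbf{x}\rangle}$ uniformly; the content of the theorem is that this dense convergence set is nevertheless of first Baire category.

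The reduction is as follows. Since a convergent sequence is bounded, $E\subseteq B:=\set{f:\sup_n\rho(U_{n_D}f,0)<\infty}$. By the uniform boundedness principle for continuous linear maps from a Baire $F$-space into the $F$-space $L_0$, if $B$ (hence a fortiori if $E$) is of second category, then the family $\set{U_{n_D}}$ is equicontinuous at $0$, i.e.
\begin{equation*}
\lim_{c\to\infty}\ \sup_{\norm{f}_{L_Q}\le1}\ \sup_n\ \mes\set{\mathbf{x}:\abs{U_{n_D}f(\mathbf{x})}>c}=0.
\end{equation*}
Therefore the entire theorem reduces, by contraposition, to proving that this equicontinuity \emph{fails} whenever $L_Q(\mathbb{T}^d)\nsubseteq L\log^{d-1}L(\mathbb{T}^d)$; once non-equicontinuity is established, $E$ cannot be of second category and is consequently of first category.

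The core is thus a quantitative lower bound with extremal functions. Here I would exploit the tensor structure $U_{n_D}=U_{n_1}\otimes\cdots\otimes U_{n_d}$, where each one-dimensional factor $U_{n_j}$ is, uniformly in $n_j$, of weak type $(1,1)$ (this already needs a direct kernel estimate, since $U_{n_j}$ is convolution with $F_{n_j}$ and $\norm{F_{n_j}}_1\asymp\log n_j$ is unbounded, yet the conjugate-Dirichlet structure of $F_{n_j}$ keeps the weak-$(1,1)$ bound). Iterating this bound one variable at a time, and losing exactly one logarithm at each successive integration, is what produces the exponent $d-1$ rather than the naive $d$, and motivates $L\log^{d-1}L$ as the critical space. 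To \emph{defeat} equicontinuity one translates the hypothesis as the existence of a sequence $u_k\uparrow\infty$ with $Q(u_k)=o\!\left(u_k\log^{d-1}u_k\right)$, and builds test functions $f_k=a_k\chi_{R_k}$ on boxes $R_k$ of measure $\sim1/Q(u_k)$, with heights $a_k$ and indices $n_k(D)=(n_k,\dots,n_k)$ tuned to $u_k$, so that $\norm{f_k}_{L_Q}\to0$ while $U_{n_k(D)}f_k$ exceeds a fixed level on a set whose measure stays bounded away from $0$. The required $d-1$ logarithmic factors accumulate from the $d$-fold product of the one-dimensional logarithmic kernels summed across dyadic scales $1/n_k\lesssim\abs{x_j}\lesssim1$, exactly as in the sharpness example for the strong maximal operator.

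The step I expect to be the main obstacle is precisely this last construction: extracting the correct power $\log^{d-1}$ in the lower bound, matching the iterated weak-type upper bound, rather than the $\log^{d}$ that $\norm{F_{n_D}}_1\asymp\log^{d}n$ would naively suggest. The delicate point is that convergence in measure is governed by the \emph{level sets} of $U_{n_D}f$, not by its $L_1$ mass, so one logarithm is absorbed when one passes from the size of the product kernel to the measure of the exceptional set; making this bookkeeping line up with the extremal growth $u_k\log^{d-1}u_k/Q(u_k)\to\infty$ permitted by the failure of the embedding is where the genuine work lies. With non-equicontinuity secured, the Baire category dichotomy for operators into $L_0$ immediately forces $E$ to be of first Baire category, completing the proof.
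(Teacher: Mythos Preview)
The paper does not prove Theorem~T; it is quoted from Tkebuchava~\cite{Tk} as background, so there is no proof here to compare your proposal against.

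Your Banach--Steinhaus/condensation outline is the standard route and is sound in spirit, but one formulation needs repair: your set $B=\{f:\sup_n\rho(U_{n_D}f,0)<\infty\}$ is all of $L_Q$, since the metric $\rho(f,0)=\int\abs{f}/(1+\abs{f})$ is bounded by $\mes(\mathbb{T}^d)$. What you need is boundedness in the topological-vector-space sense in $L_0$, namely $\sup_n\mes\{\abs{U_{n_D}f}>\lambda\}\to0$ as $\lambda\to\infty$; convergence in measure does imply this, and the $F$-space uniform boundedness principle then gives equicontinuity from second category, so your contrapositive strategy survives once this is stated correctly.

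For the lower-bound construction you leave open, the closest thing actually carried out in the paper is the proof of Theorem~\ref{normdiv}: there the test functions $\mathbf{1}_{[0,\gamma_n]^{|B|}}/(2\gamma_n)^{|B|}$ together with Lemma~\ref{GT} (the pointwise estimate $F_{2^{2n}}(x-z)\gtrsim 1/x$ on a set $J_n$ with $\int_{J_n}dx/x\gtrsim n$) produce one logarithmic factor per N\"orlund coordinate and yield $\norm{L_{2^{2n}(B)}\circ R_{2^{2n}(B')}}_{L_Q\to L_1}\gtrsim 2^{2n|B|}n^{|B|}/Q(2^{2n|B|})$. That argument is aimed at $L_1$-divergence and hence gives exponent $|B|$; recovering the sharper exponent $d-1$ for divergence \emph{in measure} in the pure N\"orlund case $B=D$ is exactly what the paper defers to~\cite{Tk}, and your diagnosis that the saving of one logarithm occurs in passing from kernel $L_1$-mass to level-set measure is on target.
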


In \cite{GG} we considered the strong logarithmic means of rectangular
partial sums double Fourier series%
\begin{equation*}
\sigma _{n,m}\left( f;x,y\right) :=\frac{1}{l_{n}l_{m}}\sum\limits_{i=0}^{n}%
\sum\limits_{j=0}^{m}\frac{\left\vert S_{i,j}\left( f;x,y\right) \right\vert
}{\left( n-i+1\right) \left( m-j+1\right) }
\end{equation*}%
and prove that these means are acting from space $L\log L\left( \mathbb{T}%
^{2}\right) $ into space $L_{p}\left( \mathbb{T}^{2}\right) ,0<p<1$. This
fact implies the convergence of strong logarithmic means of rectangular
partial sums of double Fourier series in measure on $\mathbb{T}^{2}$ to the
function $f\in L\log L\left( \mathbb{T}^{2}\right) $ . Uniting these results
with statement from \cite{Tk1} we obtain, that the rectangular partial sums
of double Fourier series converge in measure for all functions from Orlicz
space if and only if their strong Nörlund logarithmic means  converge in measure. Thus, not all classic regular
summation methods can improve the convergence in measure of double Fourier
series.

The results for summability of logarithmic means of Walsh-Fourier series can
be found in \cite{GGT2, GGNSMH,gg, GogJAT,sza,ya}.

In this paper we consider the mixed logarithmic means $\left( L_{n_{B}}\circ
R_{n_{B^{\prime }}}\right) \left( f\right) $ of rectangular partial sums
multiple Fourier series and prove that these means are acting from space $%
L\log ^{|B|}L\left( \mathbb{T}^{d}\right) $ into space $L_{1}\left( \mathbb{T%
}^{d}\right) $ (see Theorem \ref{logest} ). This fact implies the
convergence of mixed logarithmic means of rectangular partial sums of
multiple Fourier series converge in $L_{1}$-norm. We also prove sharpness of
this result (see Theorem \ref{normdiv}). In particular, the following are true.

\begin{theorem}
\label{logest}Let $B\subset D$ and $f\in L\log ^{|B|}L\left( \mathbb{T}%
^{d}\right) $. Then%
\begin{equation*}
\left\Vert \left( L_{n_{B}}\circ R_{n_{B^{\prime }}}\right) \left( f\right)
\right\Vert _{L_{1}\left( \mathbb{T}^{d}\right) }\lesssim 1+\left\Vert
\left\vert f\right\vert \log ^{|B|}\left\vert f\right\vert \right\Vert
_{L_{1}\left( \mathbb{T}^{d}\right) }.
\end{equation*}
\end{theorem}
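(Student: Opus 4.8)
The plan is to work directly from the kernel representation
\begin{equation*}
\left( L_{n_{B}}\circ R_{n_{B^{\prime }}}\right) \left( f;\mathbf{x}\right) =
\frac{1}{\pi ^{d}}\int_{\mathbb{T}^{d}}f\left( \mathbf{t}\right)
F_{n_{B}}\left( \mathbf{x}_{B}-\mathbf{t}_{B}\right) G_{n_{B^{\prime
}}}\left( \mathbf{x}_{B^{\prime }}-\mathbf{t}_{B^{\prime }}\right) d\mathbf{t}
\end{equation*}
and to exploit the tensor structure $F_{n_{B}}=\prod_{j\in B}F_{n_{j}}$, $G_{n_{B^{\prime }}}=\prod_{j\in B^{\prime }}G_{n_{j}}$, so that the mixed mean is the composition of the one–dimensional operators $L_{n_{j}}$ ($j\in B$) and $R_{n_{j}}$ ($j\in B^{\prime }$) acting in the separate variables. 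The strategy is that the Riesz directions are harmless (each is uniformly bounded on $L_{1}$), while each Nörlund direction costs exactly one logarithm, which is what produces $L\log ^{|B|}L$ and not more.

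First I would dispose of the Riesz directions by proving the uniform bound $\norm{G_{n}}_{1}\lesssim 1$. Writing $\sum_{i=0}^{k}D_{i}=(k+1)K_{k}$ with $K_{k}$ the Fejér kernel and applying Abel summation to $\sum_{i=0}^{n}\frac{D_{i}}{i+1}$ with the decreasing weights $\frac{1}{i+1}$, the sum telescopes into $K_{n}+\sum_{i=0}^{n-1}\frac{K_{i}}{i+2}$; since $\norm{K_{i}}_{1}\lesssim 1$ this has $L_{1}$ norm $\lesssim l_{n}$, and dividing by $l_{n}$ gives $\norm{G_{n}}_{1}\lesssim 1$. Consequently, integrating out the $B^{\prime }$ variables by Fubini costs only a bounded factor, and the problem reduces to the genuinely $|B|$–dimensional estimate for the product Nörlund kernel $\prod_{j\in B}F_{n_{j}}$.

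The core is then the one–dimensional statement $\norm{L_{n}f}_{1}\lesssim 1+\int_{\mathbb{T}}|f|\log ^{+}|f|$, uniformly in $n$. Here the naive route fails: the analogue of the Abel computation shows only $\norm{F_{n}}_{1}\lesssim \log n$, so one cannot pull $\norm{F_{n}}_{1}$ out of a Fubini estimate. The cancellation inside $F_{n}$ must be used. I would write $D_{k}(u)=\frac{\sin((k+\frac12)u)}{2\sin(u/2)}$, substitute $j=n-k$, and expand $\sin((n-j+\frac12)u)$ to obtain
\begin{equation*}
F_{n}(u)=\frac{1}{2l_{n}\sin(u/2)}\Big[\sin\big((n+\tfrac12)u\big)\sum_{j=0}^{n}\tfrac{\cos ju}{j+1}-\cos\big((n+\tfrac12)u\big)\sum_{j=0}^{n}\tfrac{\sin ju}{j+1}\Big].
\end{equation*}
The second bracket has uniformly bounded partial sums, so that contribution is a modulated Hilbert (conjugate function) operator and is controlled by Zygmund's theorem $\norm{\tilde f}_{1}\lesssim 1+\int|f|\log ^{+}|f|$. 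The first bracket carries the genuine logarithmic growth $\sum_{j}\frac{\cos ju}{j+1}\sim \log\frac{1}{|u|}$, and combined with the Dirichlet factor it is handled by splitting the convolution at $|u|\sim 1/n$: the local part is dominated by the Hardy–Littlewood maximal function, for which $\int_{\mathbb{T}}Mf\lesssim 1+\int|f|\log ^{+}|f|$, and the tail part, after using the $\frac{1}{l_{n}}\sim\frac1{\log n}$ gain together with a level–set decomposition of $f$, absorbs the surviving $\log\frac1{|u|}$ weight into the $L\log L$ norm.

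Finally I would pass from one to $|B|$ Nörlund coordinates. Since the operators act in distinct variables, the bound iterates in the spirit of Jessen–Marcinkiewicz–Zygmund multi‑parameter theory: applying the one‑dimensional $L\log L\to L_{1}$ estimate successively in the $|B|$ variables (the Riesz variables being absorbed at the $L_{1}$ level by $\norm{G_{n}}_{1}\lesssim 1$) accumulates precisely $|B|$ logarithmic factors, yielding control by the joint norm $\norm{\,|f|\log ^{|B|}|f|\,}_{1}$ via the generalized Young inequality $ab\lesssim a\log ^{+}a+(e^{b}-1)$. I expect the main obstacle to be exactly this last interplay: because $\norm{F_{n}}_{1}$ already grows like $\log n$ in a single variable, every step must retain the oscillatory cancellation (through the conjugate‑function/maximal‑function decomposition above) rather than estimating $|F_{n}|$, and the bookkeeping of Orlicz norms through the iterated application must be arranged so that the $|B|$ "bad" directions contribute the power $\log ^{|B|}$ and no more.
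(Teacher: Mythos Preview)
Your handling of the Riesz directions via Abel summation and Fej\'er kernels is correct and is essentially what the paper dismisses as ``easy to prove.'' The substantive difference lies in the N\"orlund directions and, more importantly, in how the $|B|$ of them are chained together. The paper does not decompose $F_n$ by hand at all: it quotes from \cite{GG} that $f\mapsto f\ast F_n$ is of weak type $(1,1)$ uniformly in $n$, pairs this with the trivial strong type $(\alpha,\alpha)$, and then invokes the Marcinkiewicz--type interpolation consequence (from \cite{E})
\[
\int_{\mathbb{T}}|Tf|\,\log^{\beta}|Tf|\;\lesssim\;1+\int_{\mathbb{T}}|f|\,\log^{\beta+1}|f|\qquad(\beta\ge 0).
\]
This single inequality is what makes the iteration a one--line chain $L\log^{r}L\to L\log^{r-1}L\to\cdots\to L_1$, with the $R$--operators absorbed first at the $L_1$ level.

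Your proposal has a real gap precisely at the iteration. Knowing only the endpoint $L\log L\to L_1$ for each one--dimensional $L_{n_j}$ is \emph{not} enough: after the first application you are in $L_1$ in that variable, and nothing remains to feed into the next $L_{n_j}$. The Jessen--Marcinkiewicz--Zygmund machinery you invoke does not iterate the endpoint estimate; it iterates the full scale $L\log^{k+1}L\to L\log^{k}L$, and that scale comes from weak $(1,1)$, not from a single $L\log L\to L_1$ bound. Your kernel decomposition into a conjugate--function piece and a maximal--function piece could in fact deliver weak $(1,1)$ for $f\ast F_n$ (both ingredients are weak $(1,1)$), but you never isolate that conclusion, and the ``tail'' step---absorbing the $\log(1/|u|)$ weight by a level--set decomposition---is too vague to stand in for it; taken at face value it only recovers $\|F_n\|_1\lesssim\log n$. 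The cleanest repair is to aim your one--dimensional analysis at the weak $(1,1)$ inequality for $L_n$ (this is exactly the content of \cite{GG}) and then apply the displayed interpolation inequality; once you have that, the $|B|$--fold iteration and the theorem follow in three lines, as in the paper.
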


\begin{theorem}
\label{normconv}

Let $B\subset D$ and $f\in L\log ^{|B|}L\left( \mathbb{T}^{d}\right) $. Then
\begin{equation*}
\left\Vert \left( L_{n_{B}}\circ R_{n_{B^{\prime }}}\right) \left( f\right)
-f\right\Vert _{L_{1}\left( \mathbb{T}^{d}\right) }\rightarrow 0\text{ \ as
\ }n_{i}\rightarrow \infty ,i\in D\text{;}
\end{equation*}
\end{theorem}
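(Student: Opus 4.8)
The plan for Theorem~\ref{normconv} is to combine the norm estimate of Theorem~\ref{logest} with an approximation argument by trigonometric polynomials, using the standard Banach–Steinhaus/density scheme. The key structural facts I would rely on are: (i) Theorem~\ref{logest} furnishes a uniform bound on the operators $\left(L_{n_B}\circ R_{n_{B'}}\right)$ as maps from $L\log^{|B|}L(\mathbb{T}^d)$ into $L_1(\mathbb{T}^d)$; (ii) trigonometric polynomials are dense in $L\log^{|B|}L(\mathbb{T}^d)$; and (iii) the means reproduce trigonometric polynomials in the limit.

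First I would verify the convergence on the dense class of trigonometric polynomials. Fix a trigonometric polynomial $P$ of degree at most $N$ in each variable. Because $L_{n}$ and $R_{n}$ are each averages of rectangular partial sums $S_{n_B-i_B,\,i_{B'}}$, and since $S_{k}(P)=P$ for every index $k\ge N$ while the low-order terms $k<N$ contribute a vanishing fraction of the total mass of the kernel weights $\prod_{j}(i_j+1)^{-1}$ as $n_i\to\infty$, one gets $\left(L_{n_B}\circ R_{n_{B'}}\right)(P)\to P$ uniformly, hence in $L_1$-norm. Concretely, the weight $1/l_n=(\sum_{k=0}^n 1/(k+1))^{-1}\to 0$, so the portion of the logarithmic average carried by the finitely many indices below $N$ tends to zero; this is the mechanism that forces convergence to $P$ on polynomials.

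Next I would pass to a general $f\in L\log^{|B|}L(\mathbb{T}^d)$. Given $\eps>0$, choose a trigonometric polynomial $P$ with $\norm{f-P}_{L\log^{|B|}L}$ small. Writing
\begin{equation*}
\left(L_{n_B}\circ R_{n_{B'}}\right)(f)-f
=\left(L_{n_B}\circ R_{n_{B'}}\right)(f-P)
+\left[\left(L_{n_B}\circ R_{n_{B'}}\right)(P)-P\right]
+(P-f),
\end{equation*}
I would bound the three terms in $L_1$-norm separately. The middle term tends to $0$ by the polynomial case just established. The first term is controlled by Theorem~\ref{logest} applied to $f-P$: its $L_1$-norm is $\lesssim 1+\norm{\,\abs{f-P}\log^{|B|}\abs{f-P}\,}_{L_1}$, which can be made small once $f-P$ is small in the $L\log^{|B|}L$ sense. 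The last term is small by the choice of $P$. Taking $n_i\to\infty$ for all $i\in D$ then yields $\limsup\norm{\left(L_{n_B}\circ R_{n_{B'}}\right)(f)-f}_{L_1}\lesssim\eps$, and letting $\eps\to0$ finishes the proof.

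The main obstacle is the interface between the two norms in the first term. Theorem~\ref{logest} is stated with the additive constant $1$ and with the nonlinear quantity $\norm{\abs{g}\log^{|B|}\abs{g}}_{L_1}$ rather than a genuine operator norm, so I must argue that as $\norm{f-P}_{L\log^{|B|}L}\to 0$ the expression $\norm{\,\abs{f-P}\log^{|B|}\abs{f-P}\,}_{L_1}$ also tends to $0$ (absolute continuity of the $L\log^{|B|}L$ modular, so that the error can be absorbed), and that the stray additive $1$ does not obstruct the limit — this requires applying the estimate to the scaled or truncated difference rather than naively. Handling this modular-versus-norm subtlety carefully, rather than treating the operators as uniformly bounded linear maps, is the delicate point; once it is settled the density argument closes routinely.
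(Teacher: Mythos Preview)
Your proposal is correct and is precisely the standard density argument that the paper leaves implicit: the paper's entire proof of Theorem~\ref{normconv} is the single line ``The validity of Theorem~\ref{normconv} follows immediately from Theorem~\ref{logest},'' and your uniform-boundedness-plus-density scheme is exactly what that sentence encodes. Your observation about the additive~$1$ and the modular is well taken, and your suggested fix (apply the estimate to the rescaled difference, using linearity of the operator and the $\Delta_2$ property of $u\log^{|B|}(1+u)$) is the right way to convert the modular bound of Theorem~\ref{logest} into a genuine uniform operator bound $L\log^{|B|}L\to L_1$.
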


\begin{theorem}
\label{normdiv} Let $L_{Q}\left( \mathbb{T}^{d}\right) $ be an Orlicz space,
such that%
\begin{equation*}
L_{Q}\left( \mathbb{T}^{d}\right) \nsubseteqq L\log ^{|B|}L\left( \mathbb{T}%
^{d}\right) .
\end{equation*}%
Then\newline
a)%
\begin{equation*}
\sup\limits_{n}\left\Vert \left( L_{n\left( B\right) }\circ R_{n\left(
B^{\prime }\right) }\right)  \right\Vert _{L_{Q}\left(
\mathbb{T}^{d}\right) \rightarrow L_{1}\left( \mathbb{T}^{d}\right) }=\infty
;
\end{equation*}%
\newline
\newline
b) there exists a function $f\in L_{Q}\left( \mathbb{T}^{d}\right) $ such
that $\left( L_{n\left( B\right) }\circ R_{n\left( B^{\prime }\right)
}\right) \left( f\right) $ does not converge to $f$ in $L_{1}\left( \mathbb{T%
}^{d}\right) $-norm.
\end{theorem}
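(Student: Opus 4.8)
The plan is to deduce part (b) from part (a) by a Banach--Steinhaus argument, and to prove (a) by exhibiting concentrated test functions on which the mixed means blow up. First I would reduce (b) to (a): for each fixed $n$ the kernel $F_{n(B)}G_{n(B')}$ is a trigonometric polynomial, hence bounded, so $\left(L_{n(B)}\circ R_{n(B')}\right)$ is a bounded operator from $L_Q(\mathbb{T}^d)$ into $L_1(\mathbb{T}^d)$ (recall $L_Q\hookrightarrow L_1$ on a finite measure space). If the operator norms are unbounded in $n$, the uniform boundedness principle makes the set of $f\in L_Q(\mathbb{T}^d)$ with $\sup_n\left\|\left(L_{n(B)}\circ R_{n(B')}\right)(f)\right\|_{L_1}=\infty$ residual, in particular nonempty; such an $f$ cannot satisfy $\left(L_{n(B)}\circ R_{n(B')}\right)(f)\to f$ in $L_1$, which is (b). So the whole weight lies on (a).

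For (a) I would first translate the hypothesis. On a finite measure space the inclusion $L_Q\subseteq L\log^{|B|}L$ is equivalent to $v\log^{|B|}v\lesssim Q(v)$ for large $v$; hence $L_Q\nsubseteqq L\log^{|B|}L$ furnishes a sequence $v_k\to\infty$ with $\frac{v_k\log^{|B|}v_k}{Q(v_k)}\to\infty$. I then set $s_k:=Q(v_k)$, choose cubes $I_k\subset\mathbb{T}^d$ of side $h_k:=s_k^{-1/d}$, and take $f_k:=\chi_{I_k}$ together with $N_k\asymp h_k^{-1}$. Using the Orlicz norm of an indicator, $\left\|f_k\right\|_{L_Q}=1/Q^{-1}(1/|I_k|)=1/v_k$.

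The decisive simplification is that the kernel factorizes as a product over coordinates, so the $L_1$ norm of the output factorizes too:
\begin{equation*}
\left\|\left(L_{N_k(B)}\circ R_{N_k(B')}\right)(f_k)\right\|_{L_1(\mathbb{T}^d)}=\pi^{-d}\prod_{j\in B}\left\|F_{N_k}*\chi_{[0,h_k]}\right\|_{L_1(\mathbb{T})}\prod_{j\in B'}\left\|G_{N_k}*\chi_{[0,h_k]}\right\|_{L_1(\mathbb{T})}.
\end{equation*}
Everything reduces to two one-dimensional estimates. For the Riesz factor, since $\widehat{G_N}(0)=1$ the mean value of $\chi_{[0,h]}$ is preserved, so $\left\|G_N*\chi_{[0,h]}\right\|_{L_1}\gtrsim h$. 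For the N\"orlund factor the goal is the lower bound $\left\|F_N*\chi_{[0,h]}\right\|_{L_1}\gtrsim h\log N$ when $N\asymp h^{-1}$. Granting these, the displayed product is $\gtrsim h_k^d(\log N_k)^{|B|}\gtrsim \log^{|B|}(v_k)/Q(v_k)$, whence
\begin{equation*}
\left\|L_{N_k(B)}\circ R_{N_k(B')}\right\|_{L_Q\to L_1}\ge\frac{\left\|\left(L_{N_k(B)}\circ R_{N_k(B')}\right)(f_k)\right\|_{L_1}}{\left\|f_k\right\|_{L_Q}}\gtrsim\frac{v_k\log^{|B|}v_k}{Q(v_k)}\to\infty,
\end{equation*}
which is (a). Note how the boundary case $Q(v)=v\log^{|B|}v$ makes this ratio bounded, confirming that $L\log^{|B|}L$ is the exact threshold.

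The main obstacle is the N\"orlund lower bound $\left\|F_N*\chi_{[0,h]}\right\|_{L_1}\gtrsim h\log N$. Since $F_N=l_N^{-1}\sum_{i=0}^N D_{N-i}/(i+1)$ is a sign-changing kernel, one cannot simply integrate its absolute value; the argument must isolate a region (away from the diagonal, on a scale comparable to $h$) on which $F_N*\chi_{[0,h]}$ has an essentially constant sign and where the weights $1/(i+1)$ attached to the high-frequency Dirichlet kernels $D_{N-i}$ accumulate to order $\log N$. This is precisely where the asymmetry between the two means enters: in $G_N$ the high-frequency Dirichlet kernels carry the small weights $1/(i+1)$ and contribute only boundedly, whereas in $F_N$ they carry the large weights, producing the extra logarithm per N\"orlund coordinate and hence the power $\log^{|B|}$.
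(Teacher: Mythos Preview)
Your outline is correct and follows the same route as the paper: test the operator on indicators of small cubes, exploit the product structure of the kernel, and show that each N\"orlund coordinate contributes a logarithm while the Riesz coordinates are harmless; part~(b) then follows from~(a) by uniform boundedness, exactly as the paper states. Two points where the paper is tighter are worth recording. First, the paper takes the test function to depend only on the $B$--coordinates, namely $(2\gamma_n)^{-|B|}\mathbf{1}_{[0,\gamma_n]^{|B|}}$ with $\gamma_n\asymp 2^{-2n}$ and $N=2^{2n}$; then each Riesz factor is simply $\pi^{-1}\int_{\mathbb{T}}G_{2^{2n}}=1$, so no separate lower bound for $\|G_N*\chi_{[0,h]}\|_{L_1}$ is needed. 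Second, the N\"orlund lower bound you correctly isolate as the crux is supplied in the paper by a pointwise lemma quoted from~\cite{GT}: for $0\le z\le\gamma_n$ and $x$ in an explicit union $J_n\subset(0,\pi)$ of intervals one has $F_{2^{2n}}(x-z)\gtrsim 1/x$, and $\int_{J_n}x^{-1}\,dx\gtrsim n$, which yields precisely your claimed $\|F_N*\chi_{[0,h]}\|_{L_1}\gtrsim h\log N$ at the scale $h\asymp N^{-1}$.
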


Thus, the space $L\log ^{|B|}L\left( \mathbb{T}^{d}\right) $ is maximal
Orlicz space such that for each function $f$ from this space the means $%
\left( L_{n\left( B\right) }\circ R_{n\left( B^{\prime }\right) }\right)
\left( f\right) $ converge to $f$ in $L_{1}\left( \mathbb{T}^{d}\right) $%
-norm.

\begin{proof}[Proof of Theorem \protect\ref{logest}]
We apply the following particular case of the Marcinkiewicz interpolation
theorem \cite{E}. Let $T:L_{1}\left( T^{1}\right) \rightarrow L_{0}\left(
T^{1}\right) $ be a quasilinear operator of weak type $(1,1)$ and of type $%
\left( \alpha ,\alpha \right) $ for some $1<\alpha <\infty $ at the same
time, i. e.

a)%
\begin{eqnarray}
&&\text{mes}\left\{ x\in \mathbb{T}^{1}:\left\vert T\left( f,x\right)
\right\vert >y\right\}  \label{T1} \\
&\lesssim &\frac{1}{y}\int\limits_{\mathbb{T}^{1}}\left\vert f\left(
x\right) \right\vert dx;\,\,\,\,\forall f\in L^{1}\left( \mathbb{T}%
^{1}\right) \,\,\forall y>0;  \notag
\end{eqnarray}

b)
\begin{equation}
\,\,\,\,\,\left\Vert Tf\right\Vert _{L^{\alpha }\left( \mathbb{T}^{1}\right)
}\lesssim \,\,\left\Vert f\right\Vert _{L^{\alpha }\left( \mathbb{T}%
^{1}\right) },\,\,\,\forall f\in L^{\alpha }\left( T^{1}\right) .  \label{T2}
\end{equation}%
Then%
\begin{eqnarray}
&&\int\limits_{\mathbb{T}^{1}}\left\vert T\left( f,x\right) \right\vert \ln
^{\beta }\left\vert T\left( f,x\right) \right\vert dx\,  \label{est} \\
&\lesssim &\int\limits_{\mathbb{T}^{1}}\left\vert f\left( x\right)
\right\vert \ln ^{\beta +1}\left\vert f\left( x\right) \right\vert
dx+1\,,\,\,\,\forall \beta \geq 0.  \notag
\end{eqnarray}

In \cite{GG} it proved that for any $f\in L_{1}\left( T^{1}\right) $ the
operator $f\ast F_{n}$ has weak type (1,1) , i. e.%
\begin{eqnarray}
&&\text{mes}\left\{ x\in \mathbb{T}^{1}:\left\vert f\ast F_{n}\right\vert
>y\right\}  \label{weak} \\
&\lesssim &\frac{1}{y}\int\limits_{\mathbb{T}^{1}}\left\vert f\left(
x\right) \right\vert dx;\,\,\,\,\forall f\in L^{1}\left( \mathbb{T}%
^{1}\right) \,\,\forall y>0.  \notag
\end{eqnarray}

On the other hand, it is easy to prove that the operator $f\ast G_{n}$ has
type (1,1), i.e.%
\begin{equation}
\left\Vert f\ast G_{n}\right\Vert _{L_{1}\left( \mathbb{T}^{1}\right)
}\lesssim \left\Vert f\right\Vert _{L_{1}\left( \mathbb{T}^{1}\right) }.
\label{strong}
\end{equation}

From (\ref{T1})-(\ref{strong}) we have ($B^{\prime }:=\left\{
s_{1},s_{2},...,s_{r^{\prime }}\right\} $, $B:=\left\{ l_1,...,l_r \right\}$ )%
\begin{eqnarray*}
&&\left\Vert \left( L_{n_{B}}\circ R_{n_{B^{\prime }}}\right) \left(
f\right) \right\Vert _{L_{1}\left( \mathbb{T}^{d}\right) } \\
&=&\left\Vert \left( R_{n_{s_{1}}}\circ \cdots \circ R_{n_{s_{r^{\prime
}}}}\circ L_{n_{l_{1}}}\circ \cdots \circ L_{n_{l_{r}}}\right) \left(
f\right) \right\Vert _{L_{1}\left( \mathbb{T}^{d}\right) } \\
&\lesssim &\cdots \lesssim \left\Vert \left( L_{n_{l_{1}}}\circ \cdots \circ
L_{n_{l_{r}}}\right) \left( f\right) \right\Vert _{L_{1}\left( \mathbb{T}%
^{d}\right) } \\
&\lesssim &1+\left\Vert \left\vert L_{n_{l_{2}}}\circ \cdots \circ
L_{n_{l_{r}}}\left( f\right) \right\vert \log \left\vert L_{n_{2}}\circ
\cdots \circ L_{n_{l_{r}}}\left( f\right) \right\vert \right\Vert
_{L_{1}\left( \mathbb{T}^{d}\right) } \\
&\lesssim &\cdots \lesssim 1+\left\Vert \left\vert L_{n_{l_{r}}}\left(
f\right) \right\vert \log ^{r-1}\left\vert L_{n_{l_{r}}}\left( f\right)
\right\vert \right\Vert _{L_{1}\left( \mathbb{T}^{d}\right) } \\
&\lesssim &1+\left\Vert \left\vert f\right\vert \log ^{r}\left\vert
f\right\vert \right\Vert _{L_{1}\left( \mathbb{T}^{d}\right) }.
\end{eqnarray*}

Theorem \ref{logest} is proved.
\end{proof}

The validity of Theorem \ref{normconv} follows immediately from Theorem \ref%
{logest}.

\begin{proof}[Proof of Theorem \protect\ref{normdiv}]
a) Set%
\begin{equation*}
\alpha _{mn}:=\frac{\pi \left( 12m+1\right) }{6\left( 2^{2n}+1/2\right) }%
,\beta _{mn}:=\frac{\pi \left( 12m+5\right) }{6\left( 2^{2n}+1/2\right) }%
,\gamma _{n}:=\frac{\pi }{6\left( 2^{2n}+1/2\right) },
\end{equation*}%
\begin{equation*}
J_{n}:=\bigcup\limits_{m=1}^{2^{n-1}}\left[ \alpha _{mn}+\gamma _{n},\beta
_{mn}-\gamma _{n}\right] .
\end{equation*}%
In order to prove theorem we need the following lemma proved in \cite{GT}.

\begin{lemma}
\label{GT}Let $0\leq z\leq \gamma _{n}$ and $x\in J_{n}$. Then%
\begin{equation*}
F_{2^{2n}}\left( x-z\right) \gtrsim \frac{1}{x}.
\end{equation*}
\end{lemma}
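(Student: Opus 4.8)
The plan is to compute the kernel $F_{2^{2n}}$ explicitly and to isolate one positive term that is already of size $1/x$ on $J_n$. Write $N:=2^{2n}$ and $M:=N+\tfrac12=2^{2n}+\tfrac12$, so that $M$ is exactly the quantity occurring in the denominators of $\alpha_{mn},\beta_{mn},\gamma_n$. Using $D_k(u)=\sin\!\big((k+\tfrac12)u\big)/\sin(u/2)$ and $N-i+\tfrac12=M-i$ in the definition of $F_N$, one gets
\[
F_N(u)=\frac{1}{l_N\,\sin(u/2)}\sum_{i=0}^{N}\frac{\sin\!\big((M-i)u\big)}{i+1}.
\]
Expanding $\sin\!\big((M-i)u\big)=\sin(Mu)\cos(iu)-\cos(Mu)\sin(iu)$ splits $F_N(u)$ into a ``cosine part'' carrying the factor $\sin(Mu)$ and a ``sine part'' carrying $\cos(Mu)$. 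I would show that the cosine part is the positive dominant contribution and the sine part is negligible.

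First I would pin down the sign and size of $\sin(Mu)$ for $u=x-z$. For $x\in J_n$ there is an $m$ with $x\in[\alpha_{mn}+\gamma_n,\beta_{mn}-\gamma_n]=\big[\tfrac{\pi(12m+2)}{6M},\tfrac{\pi(12m+4)}{6M}\big]$, whence $Mx\in[2\pi m+\tfrac{\pi}{3},\,2\pi m+\tfrac{2\pi}{3}]$; since $0\le z\le\gamma_n$ forces $0\le Mz\le\tfrac{\pi}{6}$, it follows that $Mu=Mx-Mz\equiv\theta\pmod{2\pi}$ with $\theta\in[\tfrac{\pi}{6},\tfrac{2\pi}{3}]$, so $\sin(Mu)\ge\sin(\tfrac{\pi}{6})=\tfrac12$. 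This is precisely where the arithmetic of $J_n$ and the constraint on $z$ enter. At the same time, on $J_n$ one has $\tfrac{7\pi}{3M}\le x\lesssim 2^{-n}$, and $z\le\gamma_n$ is a small fraction of $x$ (indeed $z/x\le\tfrac1{14}$), so $u\asymp x$, with both small; hence $1/\sin(u/2)\asymp 1/u\asymp 1/x$.

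It remains to estimate the two trigonometric sums. The sine sum $\sum_{i=0}^{N}\tfrac{\sin(iu)}{i+1}$ is uniformly bounded (Abel summation with the conjugate-Dirichlet bound $|\sum_{i\le t}\sin iu|\lesssim 1/|\sin(u/2)|$, used together with the trivial bound $\lesssim t$), so the sine part of $F_N$ is $O\!\big(\tfrac{1}{l_N u}\big)=O\!\big(\tfrac{1}{nx}\big)$ because $l_N\asymp n$. For the cosine sum I would invoke $\sum_{i=1}^{N}\tfrac{\cos iu}{i}=-\log|2\sin(u/2)|+O\!\big(\tfrac{1}{Nu}\big)$: since $Nu\gtrsim 1$ on $J_n$ the error is $O(1)$, while $-\log|2\sin(u/2)|\ge\log(1/u)-O(1)\gtrsim n$ uniformly over $J_n$ (here one uses $u\lesssim 2^{-n}$). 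Thus $\sum_{i=0}^{N}\tfrac{\cos iu}{i+1}\gtrsim n\asymp l_N$, and combining with $\sin(Mu)\ge\tfrac12$ and $1/\sin(u/2)\asymp 1/x$ yields
\[
F_N(x-z)\ \ge\ \frac{\sin(Mu)}{l_N\,\sin(u/2)}\sum_{i=0}^{N}\frac{\cos iu}{i+1}-O\!\Big(\frac{1}{nx}\Big)\ \gtrsim\ \frac1x .
\]

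The main obstacle is the uniform lower bound $\sum_{i=0}^{N}\tfrac{\cos iu}{i+1}\gtrsim n$ together with the control of the truncation: one must verify that $\log(1/u)$ stays comparable to $n=\tfrac12\log_2 N$ across the whole range of $m$ (from $m=1$, where $u\asymp 1/N$, up to $m\asymp 2^{n-1}$, where $u\asymp 2^{-n}$), and that the tail $\sum_{i>N}\tfrac{\cos iu}{i}$ is genuinely $O(1)$ there. Once these two uniform estimates are secured, the dominance of the cosine part over the bounded sine part is automatic, and the stated bound $F_{2^{2n}}(x-z)\gtrsim 1/x$ follows.
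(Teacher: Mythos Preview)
The paper does not actually prove this lemma; it is quoted from \cite{GT} and used as a black box. So there is no ``paper's proof'' to compare against, and your write-up has to stand on its own.

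Your argument is essentially correct. The decomposition
\[
F_N(u)=\frac{\sin(Mu)}{l_N\sin(u/2)}\sum_{i=0}^{N}\frac{\cos iu}{i+1}-\frac{\cos(Mu)}{l_N\sin(u/2)}\sum_{i=0}^{N}\frac{\sin iu}{i+1}
\]
together with the verification that $\sin(Mu)\ge\tfrac12$ for $u=x-z$, $x\in J_n$, $0\le z\le\gamma_n$, is exactly the mechanism that makes the constants $12m+1$, $12m+5$ and the shrink by $\gamma_n$ work, and you have computed this correctly. The estimates $u\asymp x$, $l_N\asymp n$, the uniform boundedness of the sine sum, and the lower bound $\sum_{i}\frac{\cos iu}{i}\gtrsim\log(1/u)\gtrsim n$ across the whole of $J_n$ (the key point being that even at the top end $m=2^{n-1}$ one still has $u\lesssim 2^{-n}$) are all standard and combine to give $F_{2^{2n}}(x-z)\gtrsim 1/x$ as stated.

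Two small points worth tightening in a final version. First, your cosine-sum identity is written for $\sum_{i=1}^{N}\tfrac{\cos iu}{i}$, whereas the kernel involves $\sum_{i=0}^{N}\tfrac{\cos iu}{i+1}$; the discrepancy is $O(1)$ because $\sum_{i\ge1}\tfrac{1}{i(i+1)}<\infty$, but say so explicitly. Second, the asymptotic $\log(1/u)\gtrsim n$ needs $n$ bounded away from $0$; for the application in the paper (letting $n\to\infty$) this is harmless, but the lemma as stated is for all $n$, so either note that the implicit constant may depend on small $n$ or check the first few cases directly.
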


Let
\begin{equation*}
Q\left( 2^{2n|B|}\right) \gtrsim 2^{2n|B|}\text{ \ for \ }n>n_{0}.
\end{equation*}%
By virtue of estimate (\cite{KR}, Ch. 2)%
\begin{equation*}
\left\Vert f\right\Vert _{L_{Q}}\leq 1+\left\Vert Q\left( |f|\right)
\right\Vert _{L_{1}}.
\end{equation*}%
We can write%
\begin{eqnarray}
&&\left\Vert L_{2^{2n}\left( B\right) }\circ R_{2^{2n}\left( B^{\prime
}\right) }\left( \frac{\mathbf{1}_{\left[ 0,\gamma _{n}\right] ^{|B|}}}{%
\left( 2\gamma _{n}\right) ^{|B|}}\right) \right\Vert _{L_{1}\left( \mathbb{T%
}^{d}\right) }  \label{1} \\
&\leq &\left\Vert L_{2^{2n}\left( B\right) }\circ R_{2^{2n}\left( B^{\prime
}\right) }\right\Vert _{L_{Q}\left( \mathbb{T}^{d}\right) \rightarrow
L_{1}\left( \mathbb{T}^{d}\right) }\left\Vert \frac{\mathbf{1}_{\left[
0,\gamma _{n}\right] ^{|B|}}}{\left( 2\gamma _{n}\right) ^{|B|}}\right\Vert
_{L_{Q}\left( \mathbb{T}^{d}\right) }  \notag \\
&\leq &\left\Vert L_{2^{2n}\left( B\right) }\circ R_{2^{2n}\left( B^{\prime
}\right) }\right\Vert _{L_{Q}\left( \mathbb{T}^{d}\right) \rightarrow
L_{1}\left( \mathbb{T}^{d}\right) }  \notag \\
&&\times \left( 1+\left\Vert Q\left( \frac{\mathbf{1}_{\left[ 0,\gamma _{n}%
\right] ^{|B|}}}{\left( 2\gamma _{n}\right) ^{|B|}}\right) \right\Vert
_{L_{1}\left( \mathbb{T}^{d}\right) }\right)  \notag \\
&\lesssim &\left\Vert L_{2^{2n}\left( B\right) }\circ R_{2^{2n}\left(
B^{\prime }\right) }\right\Vert _{L_{Q}\left( \mathbb{T}^{d}\right)
\rightarrow L_{1}\left( \mathbb{T}^{d}\right) }\left( 1+\gamma
_{n}^{|B|}Q\left( \frac{1}{\left( 2\gamma _{n}\right) ^{|B|}}\right) \right)
\notag \\
&\lesssim &\left\Vert L_{2^{2n}\left( B\right) }\circ R_{2^{2n}\left(
B^{\prime }\right) }\right\Vert _{L_{Q}\left( \mathbb{T}^{d}\right)
\rightarrow L_{1}\left( \mathbb{T}^{d}\right) }\frac{Q\left(
2^{2n|B|}\right) }{2^{2n|B|}}.  \notag
\end{eqnarray}

From Lemma \ref{GT} we get%
\begin{eqnarray*}
&&L_{2^{2n}\left( B\right) }\circ R_{2^{2n}\left( B^{\prime }\right) }\left(
\frac{\mathbf{1}_{\left[ 0,\gamma _{n}\right] ^{|B|}}}{\left( 2\gamma
_{n}\right) ^{|B|}};\mathbf{x}\right)  \\
&=&\frac{1}{\left( 2\gamma _{n}\right) ^{|B|}}\frac{1}{\pi ^{d}}\int\limits_{%
\left[ 0,\gamma _{n}\right] ^{|B|}}\prod\limits_{j\in B}F_{2^{2n}}\left(
x_{j}-z_{j}\right) d\mathbf{z}_{B} \\
&&\times \int\limits_{\mathbb{T}^{|B^{\prime }|}}\prod\limits_{i\in
B^{\prime }}G_{2^{2n}}\left( x_{i}-z_{i}\right) d\mathbf{z}_{B^{\prime }} \\
&=&\frac{1}{\left( 2\gamma _{n}\right) ^{|B|}}\prod\limits_{j\in B}\frac{1}{%
\pi }\int\limits_{\left[ 0,\gamma _{n}\right] }F_{2^{2n}}\left(
x_{j}-z_{j}\right) dz_{j} \\
&&\times \prod\limits_{i\in B^{\prime }}\frac{1}{\pi}\int\limits_{\mathbb{T}%
}G_{2^{2n}}\left( z_{i}\right) dz_{i} \\
&=&\frac{1}{\left( 2\gamma _{n}\right) ^{|B|}}\prod\limits_{j\in B}\frac{1}{%
\pi }\int\limits_{\left[ 0,\gamma _{n}\right] }F_{2^{2n}}\left(
x_{j}-z_{j}\right) dz_{j} \\
&\gtrsim &\frac{1}{\left( 2\gamma _{n}\right) ^{|B|}}\prod\limits_{j\in B}%
\frac{\gamma _{n}}{x_{j}},x_{j}\in J_{n},j\in B.
\end{eqnarray*}%
Consequently,%
\begin{eqnarray}
&&\left\Vert L_{2^{2n}\left( B\right) }\circ R_{2^{2n}\left( B^{\prime
}\right) }\left( \frac{\mathbf{1}_{\left[ 0,\gamma _{n}\right] ^{|B|}}}{%
\left( 2\gamma _{n}\right) ^{|B|}}\right) \right\Vert _{L_{1}\left( \mathbb{T%
}^{d}\right) }  \label{low} \\
&\gtrsim &\prod\limits_{j\in B}\int\limits_{J_{n}}\frac{dx_{j}}{x_{j}}%
\gtrsim cn^{|B|}.  \notag
\end{eqnarray}

Combining (\ref{1}) and (\ref{low}) we obtain%
\begin{equation}
\left\Vert L_{2^{2n}\left( B\right) }\circ R_{2^{2n}\left( B^{\prime
}\right) }\right\Vert _{L_{Q}\left( \mathbb{T}^{d}\right) \rightarrow
L_{1}\left( \mathbb{T}^{d}\right) }\gtrsim \frac{2^{2n|B|}n^{|B|}}{Q\left(
2^{2n|B|}\right) }.  \label{main}
\end{equation}

The fact that
\begin{equation*}
L_{Q}\left( \mathbb{T}^{d}\right) \nsubseteqq L\log ^{|B|}L\left( \mathbb{T}%
^{d}\right)
\end{equation*}%
is equivalent to the condition%
\begin{equation*}
\lim\limits_{u\rightarrow \infty }\sup \frac{u\log ^{|B|}u}{Q\left( u\right)
}=\infty .
\end{equation*}%
Thus there exists $\left\{ u_{k}:k\geq 1\right\} $ such that%
\begin{equation*}
\lim\limits_{k\rightarrow \infty }\frac{u_{k}\log ^{|B|}u_{k}}{Q\left(
u_{k}\right) }=\infty ,u_{k+1}>u_{k},k=1,2,...,
\end{equation*}%
and a monotonically increasing sequence of positive integers $\left\{
r_{k}:k\geq 1\right\} $ such that%
\begin{equation*}
2^{2|B|r_{k}}\leq u_{k}<2^{2|B|\left( r_{k}+1\right) }.
\end{equation*}%
Then we have%
\begin{equation*}
\frac{2^{2r_{k}|B|}r_{k}^{|B|}}{Q\left( 2^{2r_{k}|B|}\right) }\gtrsim \frac{%
u_{k}\log ^{|B|}u_{k}}{Q\left( u_{k}\right) }\rightarrow \infty .
\end{equation*}%
Thus from (\ref{main}) we conclude that%
\begin{equation*}
\sup\limits_{n}\left\Vert \left( L_{n\left( B\right) }\circ R_{n\left(
B^{\prime }\right) }\right)  \right\Vert _{L_{Q}\left(
\mathbb{T}^{d}\right) \rightarrow L_{1}\left( \mathbb{T}^{d}\right) }=\infty
.
\end{equation*}%
This complete the proof of Theorem \ref{normdiv} a). Part b) follows
immediately from part a).
\end{proof}

\end{document}